\newcommand{\ee}{e}
\newcommand{\im}{\operatorname{Im}}
\newcommand{\Mod}[1]{\ (\text{mod}\ #1)}
\newcommand{\R}{\mathbb{R}}
\newcommand{\C}{\mathbb{C}}
\newcommand{\Z}{\mathbb{Z}}
\newtheorem{thm}{Theorem}[section]
\newtheorem{lemma}[thm]{Lemma}
\theoremstyle{remark}
\newtheorem{rem*}{Remark}
\newcommand{\sgn}{\operatorname{sgn}}
\newcommand\srel[2]{\begin{smallmatrix} {#1} \\ {#2} \end{smallmatrix}}
\let\originalleft\left
\let\originalright\right
\renewcommand{\left}{\mathopen{}\mathclose\bgroup\originalleft}
\renewcommand{\right}{\aftergroup\egroup\originalright}
\numberwithin{equation}{section}
\begin{document}
\title[The $D^6 R^4$ interaction as a Poincar\'e series, and a  shifted convolution sum]{The $D^6 R^4$ interaction as a Poincar\'e series, and a related shifted convolution sum}
\author{Kim Klinger-Logan, Stephen D. Miller, and Danylo Radchenko}

\maketitle
\date{\today}

\begin{abstract}
We complete the program, initiated in a 2015 paper of Green, Miller, and Vanhove, of directly constructing the automorphic solution to the string theory $D^6 R^4$ differential equation $(\Delta-12)f=-E_{3/2}^2$ for $SL(2,\Z)$.  The construction is via a type of  Poincar\'e series, and requires explicitly evaluating a particular double integral.  We also show how to use double Dirichlet series to formally derive the predicted vanishing of one type of term appearing in $f$'s Fourier expansion, confirming a conjecture made by Chester, Green, Pufu, Wang, and Wen motivated by Yang-Mills theory (and later proved rigorously by Fedosova, Klinger-Logan, and Radchenko using the Gross-Zagier  Holomorphic Projection Lemma.).   
\end{abstract}

\section{Introduction}

Many quantities of interest in type IIB string theory are naturally automorphic functions on an arithmetic quotient of a symmetric space, such as $SL(2,\Z)\backslash SL(2,\R)/SO(2)$.  For example,
the coefficients of the low energy expansion of the 4-graviton scattering amplitude are functions on the coset space $G/K$, where $G$ denotes the real points of the (algebraic) duality group and $K$ is a maximal compact subgroup of $G$.  Furthermore, $U$-duality   implies that the scattering amplitude should obey transformation rules under a discrete subgroup $\Gamma$ of $G$. In 10 space-time dimensions, for example, $G=SL(2,\mathbb R)$ and the coefficients in the low energy expansion are modular functions. Furthermore, BPS symmetry implies some of the lower order coefficients are in fact eigenfunctions of the laplacian, making them automorphic forms.  By now many examples are known and have been well-studied, e.g., \cite{BKP2020,CGPWW2020,DK2019,DKS2022a,DKS2022b,GMRV2010,GMV2015,GRV2010}. 

A good example is the $R^4$ term in the low energy expansion of  maximally supersymmetric string theory. In 10 dimensions, the coefficient function for this term is given by
\begin{align}\mathcal{E}_{(1,0)}(z) &= 2\zeta(3)E_{3/2}(z)\,,
\end{align}
where  $E_s(z)$ is the non-holomorphic Eisenstein series
\begin{equation}E_{s}(z)=\sum_{\gamma\in (B\cap \Gamma)\backslash \Gamma} \text{Im}(\gamma z)^{s}, \quad \text{Re}(s)>1\,,
\end{equation}
for  $z=x+iy$ in the complex upper half plane $\mathbb{H}\cong SL(2,\mathbb{R})/ SO(2)$, where $\Gamma=SL(2,\Z)$ and $B$ is the subgroup of upper triangular matrices in $SL(2,\mathbb{R})$.
In this case, $E_{s}(z)$ is an eigenfunction of the laplacian $\Delta=y^2\left(\partial_x^2+\partial_y^2\right)$, and hence the automorphic function $\mathcal{E}_{(1,0)}$ is furthermore an automorphic form.

However, BPS protection only carries so far and some of the most intriguing and intricate coefficients are not automorphic forms themselves, but instead related to them in a nonlinear way.
In this paper, we will revisit the $D^6 R^4$ interaction given by the $\mathcal{E}_{(0,1)}$ term studied in \cite{DGH2015, GHV1999,GMV2015,GV2006,KKL2018,Pioline}, which mathematically amounts to the solution to the inhomogeneous automorphic differential equation
\begin{equation}\label{eq:DE}
(\Delta - 12) f(z) = -\left(2\zeta(3)E_{3/2}(z)\right)^2
\end{equation}
satisfying certain growth conditions.
In \cite{GMV2015}, Green, Miller, and Vanhove give the full Fourier expansion of the solution $f(z)$. However, this solution, though explicit, was not computed directly -- rather, it was correctly guessed and subsequently verified. In fact, the appendices of \cite{GMV2015} go on to describe three potential approaches to solving (\ref{eq:DE}) directly, all of which ran into difficulties attempting to extract  the full Fourier expansion of $f(z)$.

Given the abundance of automorphic differential equations generalizing (\ref{eq:DE}) (see \cite{AK2018, CGPWW2020, DK2019, DKS2022a,DKS2022b, GKS2020}) it is desirable to be able construct a solution via a {\it method}, rather than to rely on guesswork.
In what follows, we resolve the issue that arose
in the first of those three approaches, namely the Poincar\'e series construction in    \cite[Appendix A]{GMV2015}.  The obstacle there to obtaining the explicit form of the Fourier expansion for $f$ was the difficulty of computing the integral presented in (A.40) there, i.e.,   the $a=3/2$ case of  \eqref{eq:integral} below. In Section \ref{Sec:sol}, we generalize this  Poincar\'{e} series method to equations of the form
\begin{align}(\Delta-\lambda)f(z)=-E_a(z)^2\,.
\end{align}
Specifically, the methods we outline are applicable to the cases when $a$ is a half-integer or even integer, and with specific choices of eigenvalue $\lambda$. In Section \ref{sec:int}, we explicitly compute the obstructing integral \cite[(A.40)]{GMV2015} (equivalently, \eqref{eq:integral})  which arose in the case of $a=3/2$ and $\lambda=12$. The method outlined in Section \ref{sec:int} extends (at least experimentally) to the Poisson summation integrals that arise for other half-integer values for $a$.

The computation   in Section \ref{sec:int} yields the same Fourier expansion found in \cite{GMV2015}:
  \begin{equation}f(z) = \sum_{n \in \Z}  f_n(y) e^{2 \pi i n x}\,,\end{equation}
	where
	\begin{align}\label{eq:GMVsol}
	f_n(y)& = \delta_{n,0}\tilde f(y) + \alpha_nK_{7/2}(2\pi |n|y)\\
	& \ \ \ \ +\sum_{\substack{n_1+n_2=n\\ (n_1,n_2)\neq (0,0)}}\sum_{i,j=0,1}M_{n_1,n_2}^{i,j}(\pi |n|y) K_i(2\pi |n_1|y) K_j(2\pi |n_2|y)\,,\nonumber
	\end{align}
	in which $\tilde f, M_{n_1,n_2}^{i,j},$ and  $\alpha_n$ are polynomials in $y$ and $\frac{1}{y}$  as defined in \cite[Section 2.2]{GMV2015}.
	Explicitly, each  $\alpha_n$ can be expressed as
	\begin{equation}\label{eq:alpha}\alpha_n=\sum_{n_1+n_2=n}\alpha_{n_1,n_2}\,,\end{equation}
	where
	$\alpha_{0,0}=0$,
	\begin{equation}\label{eq:alpha1}\displaystyle \lim_{n_2\to -n_1} \alpha_{n_1,n_2}\sqrt{y} K_{7/2}(2\pi |n_1+n_2|y)=\frac{8\sigma_2(|n_1|)^2  }{21n_1^6\pi^2 y^3}\quad \text{ for } n_1\neq 0,\end{equation}
\begin{align}\label{eq:alpha2}\alpha_{n,0}=\alpha_{0,n}= \frac{64\sigma_2(|n|)\left(n^2\pi^4-90\zeta(3)\right)}{135|n|^{5/2}\pi}\quad \text{ for }  n\neq 0,\end{align}
and
		\begin{align}\label{eq:alpha3}\alpha_{n_1,n_2}=\sgn(n_1+n_2)&\frac{128\pi \sigma_2(|n_1|)\sigma_2(|n_2|)}{45n_1^2n_2^2|n_1+n_2|^{7/2}}
	\left( \vphantom{\sum} n_1^5+n_2^5+15n_1^4n_2+15n_1n_2^4\right.\\  & \ \ \ \left.-80n_1^3n_2^2-80n_1^2n_2^3+60n_1^2n_2^2(n_1-n_2)\log(\textstyle{|\frac{n_1}{n_2}|} )\right)
	\nonumber\end{align}
for $n\neq 0$ and $n_1n_2\neq 0$, where $\sigma_k(n)=\sum_{d|n}d^k$ is the divisor sum function.

 The individual $\alpha_{n_1,n_2}$ arise  from a homogeneous solution to a differential equation, or from the integral \cite[(A.40)]{GMV2015} as computed in this paper; however, the sum (\ref{eq:alpha}) is of greater interest.
	Based on ideas from the AdS-CFT correspondence and Yang-Mills theory, Chester, Green, Pufu, Wang, and Wen made the surprising conjecture  in \cite[Section C.1(a)]{CGPWW2021} that   \begin{equation}\label{eq:conj2}\alpha_n  \ = \  \sum_{n_1+n_2=n}\alpha_{n_1,n_2} \ = \ 0\end{equation}
for each $n\neq 0$.
In other words, they conjectured that the total sum of the Fourier coefficients in \eqref{eq:alpha} corresponding to the homogeneous solution to (\ref{eq:DE}) {\it vanishes}.  This is not at all apparent from the formulas above, from which even the convergence of the sum  is not manifest.
In Section \ref{sec:Danylo} we provide a formal proof of this vanishing conjecture of Chester, Green, Pufu, Wang, and Wen, based on formal (but nonrigorous) manipulations of Dirichlet series.  After this paper was first circulated, the full vanishing conjecture was generalized and rigorously proven by Fedosova and the first and third named authors via other means (see Section~\ref{sec:Danylo} for further comments).

\section{Solution to $(\Delta-\lambda)f=-E_a^2$}\label{Sec:sol}
In this section we outline the Poincar\'e series method for     finding solutions of the automorphic differential equation
\begin{align}\label{eq:DE1}(\Delta-\lambda)f=-E_a^2
\end{align}
for special values of $a>1$ and $\lambda>0$, recapping \cite[Appendix~A]{GMV2015}.
Using the definition of the Eisenstein series, one expands
\begin{equation}\label{eq:E_a^2}
E_{a}(z)^2
\ =  \sum_{\srel{(m_1,n_1)\in \mathcal{S}}{(m_2,n_2)\in \mathcal{S}}} |m_1n_2-n_1m_2|^{-2a}\,\mathcal{T}\left(\frac{m_1z+n_1}{m_2z+n_2}\right),
\end{equation}
 where
$$\mathcal{S}:=\{(0,1)\}\sqcup\{(c,d)\in\Z\times\Z~|~ c>0\,\&\,\gcd(c,d)=1\},$$
and
$$\mathcal{T}(x+iy) =\sigma(x/y) := ((x/y) ^2+1)^{-a}$$  (terms  in \eqref{eq:E_a^2} with $m_1n_2=n_1m_2$ are replaced by an appropriate limit).
This is a consequence of the identities
\begin{align*}
\frac{\text{Re}(\gamma z)}{\text{Im}(\gamma z)}=\frac{n_1n_2+m_2n_1 x+m_1n_2x+m_1m_2(x^2+y^2)}{y(\det\gamma)}
\end{align*}
and
\begin{align*}
(n_1n_2+m_2n_1x+m_1n_2x+m_1m_2(x^2+y^2))^2+(y(\text{det}\gamma))^2
= |m_1z+n_1|^2|m_2z+n_2|^2\,,
\end{align*}
where  $\gamma=\begin{bmatrix} m_1 &n_1\\ m_2 & n_2\end{bmatrix}$.

In terms of $u=x/y$ the differential equation (\ref{eq:DE1}) transforms to
\begin{align}\label{eq:DE2}\left(\frac{d}{du}\left((1+u^2)\frac{d}{du} \right)-\lambda \right)h_{a,\lambda}(u)  \ = \ -\sigma(u) \ = \  -(u ^2+1)^{-a}\,.
\end{align}
This differential equation can be solved explicitly in many cases, including that of $a=3/2$ and $\lambda=12$ originally studied by \cite{GMV2015} -- see (\ref{eq:h3/2}) below.  In general, it has a 2-dimensional family of solutions, and frequently a unique solution   satisfying  the decay condition that  $h_{a,\lambda}(u)= O(|u|^{-2a})$  for large values of $u=x/y$.  This latter point can be checked by asymptotically solving the differential equation, and will be tacitly assumed in the rest of the paper (in particular, it was shown in \cite{GMV2015} to hold in the case of $a=3/2$ and $\lambda=12$).
In terms of that solution we define $F(x+iy):= h_{a,\lambda}(x/y)$, so that $(\Delta- \lambda) F = -\mathcal{T}$.  Summing over $\mathcal{S}$, the solution to (\ref{eq:DE1})
is given by the sum
\begin{align}\label{eq:Poincaresol}
f(z) =  \sum_{\srel{(m_1,n_1)\in \mathcal S}{(m_2,n_2)\in \mathcal{S}}} |m_1n_2-n_1m_2|^{-2a}\,F\left(\frac{m_1z+n_1}{m_2z+n_2}\right),
\end{align}
which is absolutely convergent since $h_{a,\lambda}$ is bounded by a constant multiple of $\sigma$, hence $F$ is bounded by a constant multiple of $\mathcal{T}$, and the sum in (\ref{eq:Poincaresol}) converges for $\text{Re}(a)>1$ because (\ref{eq:E_a^2}) does.

The Fourier modes
\begin{equation}\label{eq:Fexp}\hat f_{n}(y) \ =\sum_{n_1+n_2=n}\hat f_{n_1,n_2}(y)\end{equation} of the solution
$
f(z) = \sum_{n \in\mathbb{Z}}\hat f_{n}(y) e^{2\pi i n x}
$
can be derived as in \cite[Appendix~A]{GMV2015} by writing
$$f(z) \ = \  \Sigma^{0,0}(z) +\Sigma^{0,1}(z) + \Sigma^{1,0}(z)+ \Sigma^{1,1}(z),  $$
where
\begin{align}
 \Sigma^{0,0}(z) & =\lim_{(m_1n_2-n_1m_2)\to 0}|m_1n_2-n_1m_2|^{-2a}\,F\left(\frac{m_1z+n_1}{m_2z+n_2}\right),\label{stuff1}\\
 \Sigma^{0,1}(z) & = \sum_{m_2=1}^\infty\sum_{(n_2,m_2)=1}|m_2|^{-2a}\,F\left(\frac{1}{m_2z+n_2}\right),\label{stuff2}\\
 \Sigma^{1,0}(z) & = \sum_{m_1=1}^\infty\sum_{(n_1,m_1)=1}|m_1|^{-2a}\,F\left(m_1z+n_1\right),\label{stuff3}\\
 \Sigma^{1,1}(z)  & =  \sum_{m_1 =1}^\infty\sum_{(n_1,m_1)=1}\sum_{m_2=1}^\infty\sum_{(n_2,m_2)=1}|m_1n_2-n_1m_2|^{-2a}\,
 F\left(\frac{m_1z+n_1}{m_2z+n_2}\right)\label{stuff4}
\end{align}
(terms in \eqref{stuff1} -- \eqref{stuff4}  with $m_1n_2=n_1m_2$ are replaced with an appropriate limit),
and then applying Poisson summation. Note that the limit in $\Sigma^{0,0}$ is well-defined. In terms of the customary notation  $e(x)=e^{2\pi i x}$  it is straightforward to see that
$$ \Sigma^{0,1}(z) =  \Sigma^{1,0}(z) = \sum_{n\in\Z}\widehat \Sigma_n^{0,1}(y)e( n x)\ \ \ \text{ and } \ \ \   \Sigma^{1,1}(z)  = \sum_{n_1\in\Z}\sum_{n_2\in\Z}\widehat \Sigma_{n_1,n_2}^{1,1}(y)e\left((n_1+n_2) x\right),$$ while inputting Ramanujan sums as in  \cite[(A.28)]{GMV2015} gives the formula
\begin{align}
\Sigma^{0,1}(z) &= \sum_{m_2=1}^\infty\frac{1}{|m_2|^{2a}} \sum_{n_2\in (\Z/m_2\Z)^*} \sum_{n\in\Z}e\left(n( x+\textstyle{\frac{n_2}{m_2}})\right)y\,\widehat h_{a,\lambda}(ny)\\
& = \frac{1}{\zeta(2a)}\sum_{n\in\Z}e(n x)n^{1-2a}\sigma_{2a-1}(|n|)y\, \widehat h_{a,\lambda} (ny)\,.
\end{align}
Note that in the case of $a=3/2$ and $\lambda = 12$ studied in \cite{GMV2015}, the authors gave explicit formulas for both $h_{\frac{3}{2},12}$ and $\widehat{h}_{\frac{3}{2},12}$.

However, the term  $\Sigma^{1,1}$ is harder because double-Poisson summation gives the more complicated formula
\begin{align*}
\Sigma^{1,1}(z)&
= \sum_{\substack{ n_1, n_2\in\Z}}
\frac{\sigma_{2a-1}(| n_1|)\sigma_{2a-1}(| n_2|)}{| n_1 n_2|^{2a-1}\zeta(2a)^2y}e\left( (  n_1+  n_2 )x\right)\,\mathcal{I}(  n_1,  n_2; y)\,,
\end{align*}
where
\begin{equation}\label{eq:integral}\mathcal{I}(   n_1,  n_2; y):=\int_{\R^2}\frac{h_{a,\lambda}\left(\frac{r_1r_2+1}{r_2-r_1}\right)}{|r_2-r_1|^{2a}}e\left(-(  n_1 y r_1+  n_2 y r_2)\right)\, dr_1\, dr_2\end{equation}
is the key integral in this approach, as it determines the general Fourier modes via the formulas
\begin{align}
\widehat f_{0,0}(y)& = \Sigma^{0,0}(y)+2\widehat\Sigma^{ 0,1}_{0}(y)+\widehat\Sigma^{ 1,1}_{0,0}(y),\nonumber\\
\widehat f_{n,0}(y)&=\widehat f_{0,n}(y) = \widehat\Sigma^{ 0,1}_{n}(y)+\widehat\Sigma^{ 1,1}_{n,0}(y)\ \ \qquad  \ \ (\text{for }n\neq 0), \text{ and }\\
\sum_{n_1\neq 0, n} \widehat f_{n_1,n-n_1}(y)
&= \frac{1}{\zeta(2a)^2y}
\sum_{n_1\neq 0, n} \frac{\sigma_{2a-1}(| n_1|)\sigma_{2a-1}(| n-n_1|)}{|n_1(n-n_1)|^{2a-1}}\mathcal{I}(n_1,n-n_1; y)\nonumber\end{align}
as in \cite[(A.44)]{GMV2015}.

However, in \cite{GMV2015} Green, Miller and Vanhove were unable to compute the integral  $\mathcal{I}(\cdot,\cdot; y)$, instead resorting to guessing the solutions for the Fourier coefficients.  In the next two sections we explicitly compute (\ref{eq:integral}) for $a=3/2$, thereby completing the Poincar\'e series approach and providing a method for computing $\mathcal{I}$ for $a\in \frac{1}{2}\mathbb{Z}_{>1}$.


We conclude this section with a discussion of solutions to the differential equation (\ref{eq:DE2}).
 Euler's method of variation of parameters gives solutions to  (\ref{eq:DE2})  of the form
\begin{align}\label{eq:hab}
h_{a,\lambda }(u)\  = \
&P_{\lambda_-}(i u) \int _1^u\frac{i (1- \sqrt{4 \lambda+1} )\left(x^2+1\right)^{-a} Q_{\lambda_-}(i x)}{2 \lambda \left(P_{\lambda_+}(i x) Q_{\lambda_-}(i x)-P_{\lambda_-}(i x) Q_{\lambda_+}(i x)\right)}\,dx\nonumber\\
   & \ \ \ \ +\,Q_{\lambda_-}(i u) \int _1^u\frac{i (1- \sqrt{4 \lambda+1})\left(x^2+1\right)^{-a} P_{\lambda_-}(i x)}{2 \lambda
   \left(P_{\lambda_-}(i x) Q_{\lambda_+}(i x)-P_{\lambda_+}(i x) Q_{\lambda_-}(i x)\right)}\,dx\\
  &  \ \ \ \ +\,c_1 P_{\lambda_-}(i u)\,+\, c_2 Q_{\lambda_-}(i u),\nonumber
\end{align}
where $\lambda_+ := \frac{1}{2} \left(\sqrt{4 \lambda+1}+1\right)$, $\lambda_-:=\frac{1}{2} \left(\sqrt{4 \lambda+1}-1\right)$,
 and $P_n$ and $Q_n$ are Legendre functions of the first and second kind.  The last two terms in (\ref{eq:hab}) are solutions to the homogenous part of (\ref{eq:DE2}).
 When $n$ is an integer (the main case of interest in this paper) these integrals can be explicitly computed.  For example, this recovers the formula
 \begin{equation}\label{eq:h3/2}h_{\frac{3}{2},12}(u) = \frac{7+44u^2+40u^4}{3\sqrt{1+u^2}}-\frac{16}{3\pi}\left( \frac{4}{3}+ 5u^2+u(3+5u^2)\arctan(u)\right) \end{equation}
from \cite[(A.7)]{GMV2015}.
Note that the first term $ \frac{7+44u^2+40u^4}{3\sqrt{1+u^2}}$ already solves (\ref{eq:DE2}), but the second term is necessary to ensure the decay condition $h_{\frac{3}{2},12}(u)= O( |u|^{-3})$.

The rest of this section is devoted to some specific comments about cases with $\lambda$ is integral.

\subsection{Half-integral $a$}
When $\lambda_+$ and $\lambda_-$ are both nonnegative integers, the denominators of the integrands in  (\ref{eq:hab}) are constant
 because
$$P_{n}(x) Q_{n-1}(x)-P_{n-1}(x) Q_{n}(x)=\textstyle{\frac{1}{n}}\,, \ n\in\ \Z_{>0},$$
and one can compute the integrals in (\ref{eq:hab})  using trigonometric substitution. In that case $h_{a,\lambda}$ will have a form similar to that of (\ref{eq:h3/2}).  For example, when $a=n+\frac{1}{2}$ and $\lambda=(2n+1)(2n+2)$ for some $n\in\mathbb{Z}_{> 0}$,
\begin{align}\label{eq:habhalf}
h_{a,\lambda}(u)
    &= c_1 P_{2n+1}(i u)+c_2 Q_{2n+1}(i u)+Q_{2n+1}(i u) \sum_{k=0}^{ n}A_{k}\left((1+u^2)^{-k+\frac{1}{2}}-2^{-k+\frac{1}{2}}\right)\nonumber\\
    &+P_{2n+1}(i u)\left(\sum_{k=0}^{n}B_{k}\Big[\frac{\arctan u}{ (u^2+1)^{k-\frac{1}{2}} } -\frac{\pi}{   2^{k+\frac{3}{2}}}\Big]+\sum_{k=0}^{2n+1} C_{k}\left[\left({\frac{u^2}{{1+u^2}}}\right)^{\!\!k+\frac{1}{2}} \!\! -\frac{1}{2^{k+1/2}}\right] \right),
\end{align}
where $A_k,B_k$ and $C_k$ are absolute constants, and $c_1,c_2\in\C$ are arbitrary.

Notice that the solution (\ref{eq:habhalf}) already has a form similar to that of (\ref{eq:h3/2}). Specifically, $h_{a,\lambda}(u) $ for $a\in\frac{1}{2}\Z_{>0}$ is a linear combination of $\arctan u$, polynomials in $u$, and polynomials in $\frac{u}{\sqrt{u^2+1}}$. This form allows us to use the method outlined in Section \ref{sec:int} to compute the associated integral $\mathcal{I}$ in (\ref{eq:integral}) when $a=n+\frac{1}{2}$ and $\lambda=(2n+1)(2n+2)$.

\subsection{Integral $a$}

The case when $a\in\mathbb{Z}_{>0}$ results in two different forms of $h_{a,\lambda}$ depending on whether $a$ is even or odd. Choosing $\lambda=a(a-1)$ again forces the Wronskian in the denominators of (\ref{eq:hab}) to be a constant.
However, computing the integrals in (\ref{eq:hab}) yields different types of solutions (arising from different trigonometric identities) depending on whether $a$ is odd or even.

For specific choices of even values of $a$, computing the integrals in (\ref{eq:hab}) yields $h_{a,\lambda}(u)$ of a similar form to that when $a=n+\frac{1}{2}$ (i.e., a linear combination of polynomials in $u$,$\frac{1}{\sqrt{1+u^2}}$, and $\arctan u$).
When the $\arctan u$ factors are absent,  the integral $\mathcal{I}$ can  be computed directly, without resorting to the method outlined in Section \ref{sec:int}.

 When $a$ is odd, however, computing the integrals in (\ref{eq:hab}) results in the appearance of factors of $\arctan ^2 u$, making the computation of $\mathcal{I}$ more difficult.  Our method below appears to require some modifications in order to work; for example, the integration by parts method applied in the next section seems unable to handle the presence of these $\arctan ^2 u$ terms.

\section{The integral $\mathcal{I}$}\label{sec:int}
In this section we show how to explicitly compute  the integral (\ref{eq:integral}) for   $a=n+\frac{1}{2}$ and $\lambda=(2n+1)(2n+2)$, illustrated with the example of $(a,\lambda)=(\frac 32,12)$ studied in \cite{GMRV2010}. Recall that for this choice of $a$ and $\lambda$ the integral is
$$\mathcal{I}(  n_1,  n_2; y):=\int_{\R^2}\frac{h\left(\frac{r_1r_2+1}{r_2-r_1}\right)}{|r_2-r_1|^3}e\left(  n_1 y r_1+  n_2 y r_2\right)\, dr_1\, dr_2,$$
with $h$ given by (\ref{eq:h3/2}).
The main tool will be a seven-fold integration by parts (in general, a $(4a+1)$-fold integration by parts) which removes the $\arctan(u)$ term completely. To wit, define
  $$H(r_1,r_2):=\frac{h\left(\frac{r_1r_2+1}{r_2-r_1}\right)}{|r_2-r_1|^3}$$ and the differential operator
 \begin{equation}
 Df : = \frac{\partial}{\partial r_1}f+ \frac{\partial}{\partial r_2}f.
 \end{equation}
  Then
 $$D^7\left(e\left( -y(n_1 r_1+n_2  r_2)\right) \right) = 128i\cdot e\left( -y(n_1 r_1+n_2  r_2)\right)(n_1+n_2)^7\pi^7y^7,$$
and seven-fold integration by parts yields the formula
\begin{equation}
\mathcal{I}(n_1, n_2; y) =  -\frac{1}{128i(n_1+n_2)^7\pi^7y^7 } \int_{\R^2}D^7H(r_1,r_2)e\left( -y(n_1 r_1+n_2  r_2)\right)\,dr_1\,dr_2
\end{equation}
(there are no boundary terms because of the decay conditions).
To compute the integral we separate out terms in the integrand depending on whether or not they are smooth at $r_1=r_2$.  Namely,
we write
\begin{equation}\label{eq:128down}
-\frac{D^7H(r_1,r_2)}{128i(n_1+n_2)^7\pi^7} = T_1(r_1,r_2) + T_2(r_1,r_2)\,,
\end{equation}
where direct calculation shows
 \begin{align}
T_1(r_1,r_2) &=\frac{|r_1-r_2|}{(r_1^2+1)^{7}(r_2^2+1)^{7}}\,p_1(r_1,r_2)\\
T_2(r_1,r_2) &=
\frac{1}{ (r_1^2+1)^{15/2}(r_2^2+1)^{15/2}}\,p_2(r_1,r_2),
\end{align}
with  $p_1(r_1,r_2)$ and $p_2(r_1,r_2)$    explicitly-computable polynomials with coefficients that depend on $n_1$ and $n_2$, and examine their respective contributions to the total integral.  Indeed, $T_1$ and $p_1$ come from the second term in (\ref{eq:h3/2}), while $T_2$ and $p_2$ come from the first term in (\ref{eq:h3/2}). In particular   certain complicated terms such as the arctangent from (\ref{eq:h3/2}) disappear through this seven-fold differentiation.  We will next compute $\mathcal{I}(n_1, n_2; y)$ as the sum $\mathcal{I}_1(n_1, n_2; y)$ + $\mathcal{I}_2(n_1, n_2; y)$ of integrals according  the  decomposition (\ref{eq:128down}).


\subsection{The first term $T_1$}

Here
\begin{equation*}
\mathcal{I}_1(n_1,n_2,y):= \frac{1}{ y^7}\int_{\R^2}\frac{|r_1-r_2|\cdot p_1(r_1,r_2)}{ (r_1^2+1)^{7}(r_2^2+1)^{7}} \cdot e\left( -y(n_1 r_1+n_2  r_2)\right)\, dr_1\,dr_2\,.
\end{equation*}
  After changing variables $r_1\to r_1+r_2$ and then breaking up the integration into two ranges depending on the sign of $r_1$, we write
\begin{align*}
\mathcal{I}_1&(n_1,n_2,y)
=\int_{\R^2}\ \frac{ |r_1| \, p_1(r_1+r_2,r_2) \cdot e\left( -y   (n_1 (r_1+r_2)+n_2 r_2)\right)}{  y^7
   \left((r_1+r_2)^2+1\right)^7 \left(r_2^2+1\right)^7 } \,  dr_1\,dr_2\\
 & = \int_\R\int_0^\infty
\frac{  r_1 \,  p_1(r_1+r_2,r_2) \cdot e\left( -y   (n_1 (r_1+r_2)+n_2 r_2)\right)}{  y^7
   \left((r_1+r_2)^2+1\right)^7 \left(r_2^2+1\right)^7 } \,  dr_1\,dr_2\\
& \ \ \  \ \  + \int_\R\int_0^\infty
\frac{  r_1 \, p_1(-r_1+r_2,r_2) \cdot e\left( -y   (n_1 (-r_1+r_2)+n_2 r_2)\right)}{  y^7
   \left((-r_1+r_2)^2+1\right)^7 \left(r_2^2+1\right)^7 } \,  dr_1\,dr_2\,.
\end{align*}
Assume $n_1+n_2\neq 0$ and change the order of integration, so that the $r_2$-integrals can be computed by shifting the contours up or down depending on whether $n_1+n_2$ is negative or positive.  After picking up poles this results in an expression of the form
\begin{equation}\label{aftershiftingr2}
\mathcal{I}_1(n_1,n_2,y)
= N(|n_1+n_2|\pi y) \int_0^\infty  \sum_{k=3}^6\frac{  c_k \,e^{-2\pi |n_1+n_2|y}}{ (\pi y(n_1+n_2))^{k+1}}  A_k(r_1)\,r_1^{-k}\, dr_1\,,
\end{equation}
where $c_3=-\frac 23$, $c_4=4i$, $c_5=10$, $c_6=-10i$, $N(x)=8 x^3 + 24 x^2 + 30 x + 15$,
  and
$$A_k(r_1) \ = \  e^{-2\pi i n_1 r_1 y}+e^{-2\pi i n_2 r_1 y} +(-1)^{k+1}  e^{2\pi i n_1 r_1 y} +(-1)^{k+1}  e^{2\pi i n_2 r_1 y}
 .$$
Although the integral (\ref{aftershiftingr2}) converges, the individual integrals   $ \int_0^\infty  A_k(r_1)r_1^{-k}\, dr_1$ do not.  We instead  evaluate
\begin{align*}
\int_0^\infty  A_k(r_1)\,r_1^{-k}\,e^{-\epsilon r_1}\,r_1^s\, dr_1
\end{align*}
which is holomorphic for
 $\text{Re}(s)>6$ and $\text{Re}(\epsilon)>0$, and calculate the full sum by analytic continuation.  Namely, we multiply the integrand in (\ref{aftershiftingr2}) by $e^{-\varepsilon r_1}r_1^s$ to get
\begin{align*}
\mathcal{I}_1(n_1,n_2,y) \  & = \  \sum_{k=3}^6\frac{ c_k\,  e^{-2\pi |n_1+n_2|y}}{  (\pi y(n_1+n_2))^{k+1}}N(|n_1+n_2|\pi y) \int_0^\infty  A_k(r_1)\,r_1^{-k}\,e^{-\epsilon r_1}\,r_1^s\, dr_1\\
& =  \ e^{-2 \pi  y |n_1+n_2|} N(|n_1+n_2|\pi y)
   \sum_{k=3}^6 \frac{c_k\,\Gamma(s-k+1)}{(\pi y(n_1+n_2))^{k+1}} \, {\mathcal A}_{k}(s,\epsilon)\,,
\end{align*}
where
\begin{multline*}
{\mathcal A}_k(s, \epsilon) \ = \
(-1)^{k+1}(\epsilon-2\pi i n_1 y)^{-1-s+k} \ + \ (-1)^{k+1}(\epsilon-2\pi i n_2 y)^{-1-s+k} \\ +(\epsilon+2\pi i n_1 y)^{-1-s+k} \ + \ (\epsilon+2\pi i n_2 y)^{-1-s+k}. \qquad\qquad
\end{multline*}
These integrals can then be computed in terms of the standard $\Gamma$-integral, and although not all of them are   holomorphic at $s=\epsilon=0$, the full sum is and evaluates there to give
\begin{align*}
 \mathcal{I}_1(n_1,n_2,y)& = \frac{2 e^{-2 \pi  y |n_1+n_2|} }{45 \pi ^2
   y^2 (n_1+n_2)^7}N(|n_1+n_2|\pi y) \ \times \\
     & \ \   \ \Big[(n_1+n_2) \left(n_1^4+14 n_1^3 n_2-94 n_1^2 n_2^2+14 n_1 n_2^3+n_2^4\right) \ + \\
       & \ \ \ \   \  60
   n_1^2 n_2^2 (n_1-n_2) \log\left(\textstyle{\frac{|n_1|}{|n_2|}}\right)\Big].
\end{align*}
This indeed matches the homogeneous part of the solution found by Green, Miller and Vanhove in \cite{GMV2015} and given by the $\alpha_{n_1,n_2}$ term in \cite[(2.40)]{GMV2015}, and of course came from the contribution to the homogeneous part of the differential equation (\ref{eq:DE2}).

\subsection{The second term $T_2$}

We write  $p_2(r_1,r_2)$ as $\sum_{k,\ell=0}^{12}d(k,\ell)r_1^kr_2^{\ell}$ for some coefficients $d(k,\ell)$.
The integral
$$\mathcal{I}_2(n_1,n_2,y) \ := \  \frac{1}{y^7}\int_{\R^2}\frac{ p_2(r_1,r_2)\cdot e\left(-y (r_1n_1+r_2n_2)\right) }{ (1+r_1^2)^{15/2}(1+r_2^2)^{15/2}} dr_1\,dr_2 $$
can be evaluated using the function
\begin{equation}\label{eq:Int}
{\mathcal K}(\xi) \ := \ \int_{\R}\frac{1}{ (1+r^2)^{15/2}}e^{-2\pi i r\xi}\, dr  \ = \  \frac{256\pi^7|\xi|^7K_7(2\pi |\xi|)}{135135}
\end{equation}
and its derivatives, via the   formula
\begin{align}\label{eq:ederivative}\frac{\partial^k}{\partial\xi^k}\left(\frac{e(- r\xi)}{ (1+r^2)^{15/2}}\right) = \frac{(-2\pi i)^{k}r^k e(- r\xi)}{ (1+r^2)^{15/2}}\,.
\end{align}
Assume $n_1\neq 0$, $ n_2\neq 0$ and $n_1+n_2\neq 0$.
Letting $\xi_1=n_1y$ and $\xi_2=n_2y$, and applying  (\ref{eq:Int})--(\ref{eq:ederivative}) gives
\begin{align}\label{eq:I2intermsofpartials}
\mathcal{I}_2(n_1,n_2,y) & \ = \ \frac{1}{y^7}\int_{\R^2}\frac{p_2(r_1,r_2)\, e(- r_1\xi_1-r_2\xi_2)}{ (1+r_1^2)^{15/2}(1+r_2^2)^{15/2}}  dr_1\,dr_2 \\
& \ = \ \sum_{k,\ell=0}^{12}\frac{d(k,\ell)}{y^7} \cdot  \int_{\R}\frac{r_1^k\, e(- r_1\xi_1)}{ (1+r_1^2)^{15/2}} dr_1\cdot  \int_{\R}\frac{r_2^{\ell}\, e(-r_2\xi_2)}{ (1+r_2^2)^{15/2}} \,dr_2\\
& \ = \  \sum_{k,\ell=0}^{12}\frac{y^{-7}d(k,\ell) }{(-2\pi i)^{k+\ell} }\cdot \frac{\partial^k{\mathcal K}}{\partial \xi_1^k}(\xi_1) \cdot  \frac{\partial^{\ell}{\mathcal K}}{\partial \xi_2^\ell} (\xi_2)\,. 
\end{align}
Using the derivative formula $K'_\nu(z)=\frac{1}{2} (-K_{\nu -1}(z)-K_{\nu +1}(z))$ as well as the recurrence
$K_\nu(z) = K_{\nu-2}(z) +\frac{2(\nu-1)}{z} K_{\nu-1}(z)\,,$
all $K$-Bessel functions arising can be expressed in terms of combinations of $K_0$ and $K_1$.
Applying the change of variables $\xi_i\mapsto yn_i$ we find that (\ref{eq:I2intermsofpartials}) equals
\begin{multline}
 \frac{8 n_1 n_2}{3 y^5 (n_1+n_2)^7}\,\times \\
  \ \ \    \Big[\Big(n_1^4
   \left(16 \pi ^2 n_2^2 y^2-1\right)+n_1^3 \left(40 \pi ^2
   n_2^3 y^2-14 n_2\right)  +2 n_1^2 n_2^2 \left(8 \pi ^2 n_2^2
   y^2+47\right)\\
    \ \ \ \ \ \ \ \ \ \  \ \ \  \ \ \    \ \ \  \ \ \ \ \  -4 \pi ^2 n_1^5 n_2 y^2 - 2 n_1 n_2^3 \left(2
   \pi ^2 n_2^2 y^2+7\right) - n_2^4\Big)   y^5 (n_1+n_2)\\
    \ \ \ \ \ \ \ \ \ \  \ \ \   \ \ \ \ \ \ \ \ \ \  \ \   \ \ \ \ \ \    \times
  \text{sgn}(n_1 )
   \text{sgn}(n_2) K_1(2 \pi  | n_1 y| ) K_1(2 \pi  | n_2 y|
   )\\
    \ \ \ \ \     +\Big(-
\pi^2 y^2 (n_1+n_2)^2 \left(13 n_1 n_2^2+n_2^3-65
   n_1^2 n_2+19 n_1^3\right)
  +30   n_1^2
   (n_2-n_1)\Big)  \\
     \ \ \ \ \ \ \ \ \ \  \ \ \   \ \ \ \ \ \ \ \ \ \  \ \   \ \ \ \ \ \   \ \ \ \ \    \times \frac{n_2 y^4 }{\pi}  \text{sgn}(n_1) K_1(2 \pi  | n_1 y| )
   K_0(2 \pi  | n_2 y| )\\
    \ \ \ \ \     +\Big(-
\pi^2 y^2 (n_1+n_2)^2 \left(13 n_1^2 n_2+n_1^3-65
   n_1 n_2^2+19 n_2^3\right)
  +30   n_2^2
   (n_1-n_2)\Big)  \\
     \ \ \ \ \ \ \ \ \ \  \ \ \   \ \ \ \ \ \ \ \ \ \  \ \   \ \ \ \ \ \   \ \ \ \ \    \times \frac{n_1 y^4 }{\pi}  \text{sgn}(n_2) K_0(2 \pi  | n_1 y| )
   K_1(2 \pi  | n_2 y| )\\
       \ \ \ \ \  + \Big(5 n_1^2 \left(4 \pi ^2 n_2^2
   y^2-3\right)+8 \pi ^2 n_1^3 n_2 y^2-2 \pi ^2 n_1^4 y^2+2
   n_1 n_2 \left(4 \pi ^2 n_2^2 y^2+15\right)\\
    \ \ \ \ \ \ \ \ \ \  \ \ \  \ \ \    \ \ \  \ \ \ \ \  -n_2^2 \left(2
   \pi ^2 n_2^2 y^2+15\right)\Big) \,2 n_1 n_2 y^5
   (n_1+n_2)\\
    \ \ \ \ \ \ \ \ \ \  \ \ \   \ \ \ \ \ \ \ \ \ \  \ \   \ \ \ \ \ \  \  \ \ \ \ \    \times  K_0(2 \pi  | n_1 y| ) K_0(2 \pi  |
   n_2 y| )\Big].
\end{multline}
Our computation shows that $\mathcal{I}_2$ matches the principal part of the solution found in \cite[(2.35)]{GMV2015}, in regions (iii) and (iv) there.
This handles the general case of $n_1$ and $n_2$; the remaining (degenerate) cases can either be computed directly or achieved as limits of the above analysis.

\section{A conjecture of Chester, Green, Pufu, Wang and Wen}\label{sec:Danylo}

The solution (\ref{eq:GMVsol}) to the differential equation (\ref{eq:DE}) was computed in \cite{GMV2015} and above here in terms of instanton/anti-instanton pairs.  In particular, (\ref{eq:GMVsol}) involves a $K_{7/2}$-Bessel function term for the homogeneous solutions, which is manifestly nonzero for particular pairs $\alpha_{n_1,n_2}$ in \eqref{eq:alpha1}--\eqref{eq:alpha3}.   However, when summed together in (\ref{eq:alpha}) to give the Fourier modes of the solution itself, they give a rather complicated expression.  Chester, Green, Pufu, Wang and Wen have conjectured, based on ideas from the AdS-CFT correspondence and Yang-Mills theory, that this total sum actually  {\it vanishes} for all $n\neq 0$  \cite{CGPWW2021}, i.e., (\ref{eq:conj2}).

We present a mathematical calculation which explains the vanishing in \eqref{eq:conj2}. However, our explanation falls short of a proof because it involves manipulations of divergent series.\footnote{After this paper was written, a broader statement (containing (\ref{eq:conj2}) as a special case) was rigorously proven in \cite{FKLR}. Though our argument correctly deduces that $\sum_{n_1+n_2=n}\alpha_{n_1,n_2}=0$, \cite{FKLR} shows that generalizations of such sums may result in Fourier coefficients of Hecke eigenforms, and in particular can be nonzero.
Their proof avoids the divergent series here by appealing to the Gross-Zagier Holomorphic Projection Lemma.} We will use these to derive the equivalent statement
 \begin{equation}\sum_{\substack{m+n=r\\mn\neq 0}}\alpha_{m,n}=-\sum_{\substack{m+n=r\\mn= 0}}\alpha_{m,n} = -\alpha_{0,r}-\alpha_{r,0}\,, \ \ \forall r>0,\end{equation}
 the case for $r<0$ being equivalent by interchanging $r$ and $-r$.

First, the desired identity    (\ref{eq:conj2}) can be more succinctly stated in terms of
	\[\widetilde \alpha_{m,n}  = \frac{45|m+n|^{5/2}}{128\pi}\alpha_{m,n}
	\ =  \sigma_2(|m|)\sigma_2(|n|) \Big(g\Big(\frac{2m}{m+n}\Big)
	                              +g\Big(\frac{2n}{m+n}\Big)\Big)\, \ \ \text{for } m,n\neq 0,\]
where
	\begin{align*}
	g(x) =  60x\log|x|-60\log|x|-60+\frac{24}{x}+\frac{4}{x^2}\,.
	\end{align*}
In terms of $\widetilde \alpha_{m,n}$, note that \eqref{eq:alpha2} gives
$$\widetilde \alpha_{r,0} = \widetilde \alpha_{0,r}=\sigma_2(r)\big(r^2\zeta(2)+60\zeta'(-2)\big)$$ since $\zeta(2) = \frac{\pi^2}{6}$ and $\zeta'(-2)=-\frac{\zeta(3)}{4\pi^2}$.
Thus
\eqref{eq:conj2}
is equivalent to  \begin{equation}\label{eq:conj3}\displaystyle\sum_{\substack{m+n=r\\mn\neq 0}}\widetilde\alpha_{m,n}=-2\sigma_2(r)\big(r^2\zeta(2)+60\zeta'(-2)\big).\end{equation}
The rest of this paper is devoted to formally explaining why
	\begin{equation} \label{eq:identity}
A_r:=
\sum_{c,d>0}(cd)^2
	\sum_{\substack{ad-bc=r\\ ab\ne 0}} \Big(g\Big(\frac{2ad}{r}\Big)+g\Big(\frac{2bc}{-r}\Big)\Big)	\
= -2\sigma_2(r)\big(r^2\zeta(2)+60\zeta'(-2)\big),
	\end{equation}
which is equivalent to \eqref{eq:conj3} by reindexing the sum.
We stress at the outset that although parts of this argument can be made rigorous, appropriately and correctly normalizing all the divergences involved is beyond the scope of this paper.

Let
\begin{equation}\label{Trcd}
  T_r(c,d)\ := \!\!\sum_{\substack{ad-bc=r \\ ab\ne0}}g\Big(-\frac{2bc}{r}\Big)\,,
\end{equation}
so that
	\begin{equation} \label{eq:ar}
	A_r
	\ = \ 2 \sum_{c,d>0}c^2\,d^2\, T_r(c,d) \,.
	\end{equation}
To analyze $T_r(c,d)$ for fixed $c,d>0$ we pick a solution $a=a^*,b=b^*$ to $ad-bc=r$ with $|b^*|$ minimal.  The general solution to $ad-bc=r$ can be parameterized as
\begin{equation}\label{eq:astarm}
a=a^*+\frac{m}{(c,d)}c, \ b=b^*+\frac{m}{(c,d)}d, \qquad m\in\Z,
\end{equation}
 since $(a-a^*)\frac{d}{(c,d)}-(b-b^*)\frac{c}{(c,d)}=0$ forces these coefficients $a-a^*$ and $b-b^*$ of the coprime integers $\frac{d}{(c,d)}$ and $\frac{c}{(c,d)}$ to have that form.

Our next goal is to perform Poisson summation on $T_r(c,d)$ over $m\in \Z$ (in terms of the parametrization in (\ref{eq:astarm})), keeping in mind that the conditions that neither $a$ nor $b$ vanish may constrain $m$ and force some terms to be excluded.  Vanishing of $a$ (resp. $b$) in the equation $ad-bc=r$ forces $c$ (resp. $d$) to divide $r$.  We break into four cases:
\begin{enumerate}
  \item Both $c|r$ and $d|r$. In this case there is a solution to $ad-bc=r$ with $a=0$, as well as a solution with $b=0$.  Thus $b^*=0$,  hence $a^*=\frac{r}{d}$, implying that  we must exclude both the terms with $m=0$ and $m=-\frac{r(c,d)}{cd}$ (which is in fact an integer, as can be seen by considering the prime factorization of the divisors $c$ and $d$ of $r$).  The excluded term corresponding to this latter value of $m$ is $g(-\frac{2bc}{r})=g(2)$.
  \item $d|r$ but $c\nmid r$.  In this case again $b^*=0$, but there is no solution with $a=0$; thus only the term for $m=0$ is excluded.
  \item $c|r$ but $d\nmid r$. Only the term $g(2)$, corresponding to the value of $-\frac{2bc}{r}$ at the solution to $ad-bc=r$ with $a=0$, is omitted.
  \item Both $c\nmid r$ and $d\nmid r$.  Here neither $a$ nor $b$ ever vanishes and no $m\in\Z$ is excluded.
\end{enumerate}
For brevity of notation we shall set    $x_{c,d}:=-\frac{b^*(c,d)}{d}$ and $y_{c,d}:=\frac{2cd}{r(c,d)}$.

To carry out formal Poisson summation, we first recall the formal integral computation that the function
\begin{equation}\label{eq:powerfunction}
x\mapsto |x|^{s-1}\sgn(x)^\eta, \quad \eta\in\{0,1\},
\end{equation}
 has Fourier transform
 $$G_\eta(s)|r|^{-s}\operatorname{sign}(-r)^\eta,$$ where
 $$G_0(s)=2(2\pi)^{-s}\Gamma(s)\cos(\pi s/2) \quad \text{and} \quad G_1(s)=2i(2\pi)^{-s}\Gamma(s)\sin(\pi s/2).$$  Applying Poisson summation to this Fourier transform pair (and its $s$-derivatives, to allow for logarithms), then invoking the  customary calculus of treating  the Fourier transform at $r=0$ as zero (which is valid for $\text{Re}{(s)}\ll 0$), results in the identity
\begin{equation} \label{eq:sumg}
	\begin{split}
	\sum_{n\in \mathbb{Z}} g(y_{c,d}(n+x_{c,d}))  =&
	-\frac{30y_{c,d}}{\pi}\im \operatorname{Li}_2(e(x_{c,d}))-60\log|1-e(x_{c,d})|\\ & \ \ \ \ \ \ \ \ \ \ \ \
	-\frac{24\pi i}{y_{c,d}}\frac{1+e(x_{c,d})}{1-e(x_{c,d})}
	-\frac{16\pi^2}{y_{c,d}^2}\frac{e(x_{c,d})}{(e(x_{c,d})-1)^2},
	\end{split}
	\end{equation}
for $x_{c,d}\notin\Z$.
When $x_{c,d}\in\Z$, we instead use the limiting case
\begin{equation} \label{eq:sumg1}
	\sum_{n\ne 0} g(ny_{c,d})  \ =
	60\log\Big|\frac{y_{c,d}}{2\pi}\Big|+60+\frac{4\pi^2}{3y_{c,d}^2}\,,
	\end{equation}
which can be derived more directly by noting that $g(x)+g(-x)=8 \left(-15 \log (| x| )-15+\frac{1}{x^2}\right)$.

It follows that
\begin{align}\label{Trcdddivr}
	T_r(c,d) \  = \
	60\log\Big|\frac{cd}{r(c,d)\pi}\Big|+60+\frac{r^2\pi^2(c,d)^2}{3(cd)^2} -
	g(2)\,\delta_{c|r}
\end{align}
for $d|r$, and
\begin{equation}\label{Trcddndivr}
\aligned
	T_r(c,d) \ = \   -g(2)\,\delta_{c|r}
		-\frac{60cd}{r(c,d)\pi}\im \mathrm{Li}_2(e(x_{c,d}))-60\log|1-e(x_{c,d})|\\
		-\frac{12r(c,d)\pi i}{cd}\frac{1+e(x_{c,d})}{1-e(x_{c,d})}
		-\frac{4r^2(c,d)^2\pi^2}{c^2d^2}\frac{e(x_{c,d})}{(e(x_{c,d})-1)^2}
\endaligned
\end{equation}
for $d\nmid r$.

Next define the double Dirichlet series
	\[L_r(s,w) := \sum_{c,d\ge 1}c^{-s}d^{-w}T_r(c,d)
	  =    \sum_{d\ge 1}\frac{L_{d,r}(s)}{d^w}\,,\ \ \text{ where }\    L_{d,r}(s):=\sum_{c\ge 1}c^{-s}T_r(c,d)\,, \]
so that $A_r=2L_r(-2,-2)$.  We first formally compute $L_{d,r}(s)$ using  the following Lemma.

\begin{lemma}\label{lem:dirichletseries}
	For $d\ge 1$ and $\text{Re}(s)$ sufficiently large    one has
	\[\sum_{c\ge1}\frac{\log|(c,d)|}{c^s} = \zeta(s)\Big(\sum_{\ell|d}\Lambda(\ell)\ell^{-s}\Big)\]
	and
	\[\sum_{c\ge1}\frac{(c,d)^k}{c^{s+k}} =
	\zeta(s+k)\prod_{p|d}\Big(1+(1-p^{-k})(p^{-s}+\dots+p^{-v_p(d)s})\Big)\,,\]
	where $\Lambda$ is the von Mangoldt function and $v_p(n)$ denotes the valuation of $n$ at prime $p$.  In particular, the product is equal to $d^k$ when $s+k=0$.
\end{lemma}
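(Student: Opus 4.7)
The plan is to reduce both identities to classical divisor expansions of $\log|(c,d)|$ and $(c,d)^k$, which then allow interchanging the sum over $c$ with a sum over divisors of $d$. The interchange is justified by absolute convergence once $\text{Re}(s)$ is sufficiently large, because the two double sums are majorized respectively by $\sum_c \log(c)/c^s$ and by $\sum_c 1/c^s$.

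For the first identity I would apply the von Mangoldt identity $\log n = \sum_{\ell\mid n}\Lambda(\ell)$ with $n=(c,d)$ and swap the $c$- and $\ell$-sums. Since $\ell\mid(c,d)$ is equivalent to $\ell\mid d$ together with $\ell\mid c$, this directly gives
\[
\sum_{c\ge 1}\frac{\log|(c,d)|}{c^s}=\sum_{\ell\mid d}\Lambda(\ell)\sum_{\substack{c\ge 1\\ \ell\mid c}}\frac{1}{c^s}=\zeta(s)\sum_{\ell\mid d}\Lambda(\ell)\ell^{-s},
\]
which is exactly the first formula.

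For the second identity I would use the analogous Jordan-totient expansion $n^k=\sum_{e\mid n}J_k(e)$, where $J_k(n)=n^k\prod_{p\mid n}(1-p^{-k})$. Substituting $n=(c,d)$ and interchanging the sums in the same way gives
\[
\sum_{c\ge 1}\frac{(c,d)^k}{c^{s+k}}=\sum_{e\mid d}J_k(e)\sum_{\substack{c\ge 1\\ e\mid c}}\frac{1}{c^{s+k}}=\zeta(s+k)\sum_{e\mid d}\frac{J_k(e)}{e^{s+k}}.
\]
Because $J_k$ is multiplicative, the finite sum on the right factors over the primes dividing $d$, as $\prod_{p\mid d}\sum_{j=0}^{v_p(d)}J_k(p^j)p^{-j(s+k)}$. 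Using $J_k(1)=1$ and $J_k(p^j)=p^{jk}(1-p^{-k})$ for $j\ge 1$, each local factor telescopes to $1+(1-p^{-k})(p^{-s}+p^{-2s}+\cdots+p^{-v_p(d)s})$, which is exactly the Euler-type factor in the stated formula.

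There is no substantive obstacle here: the whole argument is just two standard divisor expansions, one sum interchange at each step, and a local prime-power calculation. The only thing worth checking carefully is the justification of the rearrangement, but this is automatic in the region of absolute convergence; the stated formulas then extend by analytic continuation to the regions of interest in the applications above.
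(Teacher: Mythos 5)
Your proof is correct. The first identity is handled exactly as in the paper: the expansion $\log|(c,d)|=\sum_{\ell\mid(c,d)}\Lambda(\ell)$ followed by an interchange of summation. For the second identity your route differs slightly from the paper's. You expand $(c,d)^k=\sum_{e\mid(c,d)}J_k(e)$ via the Jordan totient, interchange sums to get $\zeta(s+k)\sum_{e\mid d}J_k(e)e^{-s-k}$, and then factor the finite divisor sum over the primes dividing $d$. The paper instead observes that $c\mapsto(c,d)^k$ is multiplicative, so the Dirichlet series has an Euler product, and verifies the local identity
\[
\sum_{n\ge0}\frac{p^{k\min(n,v_p(d))}}{p^{n(s+k)}} =
(1-p^{-s-k})^{-1}\Big(1+(1-p^{-k})(p^{-s}+\dots+p^{-v_p(d)s})\Big)
\]
by multiplying through by $(1-p^{-s-k})$. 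The two arguments are essentially M\"obius-dual to one another: your divisor expansion isolates the ``numerator'' of the paper's local factor directly, and has the mild advantage of running in exact parallel with the proof of the first identity; the paper's version avoids introducing $J_k$ at the cost of a one-line verification at each prime. Both are complete; your local computation $J_k(p^j)p^{-j(s+k)}=(1-p^{-k})p^{-js}$ for $j\ge1$ checks out (it is a direct simplification rather than a telescoping, but the result is right), and the interchanges are justified by absolute convergence for $\mathrm{Re}(s)$ large as you note.
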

\begin{proof}
	The first formula directly follows from the identity $\log|(c,d)| = \sum_{\ell|(c,d)}\Lambda(\ell)$ and changing the order of summation in the resulting double sum. To prove the second formula, we note that the Dirichlet series on the left has multiplicative coefficients, and so it suffices to prove that
	\[\sum_{n\ge0}\frac{p^{k\min(n,v_p(d))}}{p^{n(s+k)}} =
	(1-p^{-s-k})^{-1}\Big(1+(1-p^{-k})(p^{-s}+\dots+p^{-v_p(d)s})\Big)\,,\]
	which is easily verified after multiplying both sides by $(1-p^{-s-k})$.
\end{proof}

From (\ref{Trcdddivr}) and the above Lemma, it follows that
\begin{equation*}L_{d,r}(-2) = -60\zeta'(-2)-r^2\zeta(2)-g(2)\sigma_{2}(r)\, \qquad\text{for } d|r,
\end{equation*}
since $\zeta'(s) =-\sum_{n\geq 2}\frac{\log(n)}{n^s}$ and $\zeta(-2)=0$.
The case when $d\nmid r$ is more complicated.  We claim
\begin{equation}\label{Ldr(-2)}L_{d,r}(-2) = -g(2)\sigma_{2}(r)\,\qquad\text{for } d\nmid r\end{equation}
from which it formally follows that
\[ \sum_{d\ge1}\frac{L_{d,r}(-2)}{d^w} =
	-g(2)\sigma_2(r)\zeta(w)-\sigma_{-w}(r)(r^2\zeta(2)+60\zeta'(-2)),\]
and hence (\ref{eq:identity}) by specializing $w=-2$.

To obtain  (\ref{Ldr(-2)}) we appeal to (\ref{Trcddndivr}) and set $c'=\frac{c}{(c,d)}$, $d'=\frac{d}{(c,d)}$,  and $r'=\frac{r}{(c,d)}$, recalling from (\ref{Trcd}) that $ad-bc=r$ and hence $(c,d)$ must divide $r$.  Since $x_{c,d} \equiv -\frac{b(c,d)}{d}\equiv -\frac{b}{d'} \Mod{1}$, we see that $x_{c,d}\equiv \frac{\alpha}{d'} \Mod{1}$  is  determined in terms of the unique solution $\alpha\Mod{d'}$ to the congruence $\alpha c'=r'\Mod{d'}$.
 Thus for
$d\nmid r$
 	\begin{align*}
	L_{d,r}(s) = & \sum_{\substack{0<c<d\\ (c,d)|r}}\Big[
		-\frac{60\im
		\mathrm{Li}_2(\ee(x_{c,d}))}{r(c,d)d^{s-2}\pi}\zeta\left(s-1,\frac cd\right)
		-\frac{60\log|1-\ee(x_{c,d})|}{d^s}\zeta\left(s,\frac cd\right)\\
		& -\frac{12r(c,d)\pi
		i}{d^{s+2}}\frac{1+\ee(x_{c,d})}{1-\ee(x_{c,d})}\zeta\left(s+1,\frac cd\right)
		-\frac{4r^2(c,d)^2\pi^2}{d^{s+4}}
		\frac{\ee(x_{c,d})}{(\ee(x_{c,d})-1)^2}\zeta\left(s+2,\frac cd\right)\Big]
		\,\\& -g(2)\sigma_{-s}(r),
	\end{align*}
in terms of the Hurwitz zeta-function $\zeta(s,x)=\sum_{n\ge 0}(n+x)^{-s}$.  The same formal application of Poisson summation to (\ref{eq:powerfunction}) as before shows that the Hurwitz zeta functions satisfy $\zeta(s,x)+(-1)^s\zeta(1-x)=0$ when $s\in \Z_{\le 0}$ and $0<x<1$.   When terms for $c=c_1$ and $c=d-c_1$ are grouped together and evaluated at $s=-2$, the Hurwitz zeta-functions  therefore combine together to give the
vanishing of the sum, so that only the last term $-g(2)\sigma_{2}(r)$ remains, i.e., (\ref{Ldr(-2)}) is established as claimed.

\subsection*{Acknowledgements}
We wish to thank Guillaume Bossard, Eric D'Hoker, Dorian Goldfeld, Michael B. Green, Henryk Iwaniec, Axel Kleinschmidt, Daniel Persson, Boris Pioline, Peter Sarnak, Pierre Vanhove, and Don Zagier for their helpful conversations.
K. K-L. acknowledges support from NSF Grant number DMS-2001909, while S.D.M. acknowledges support from NSF Grants numbers DMS-1801417 and DMS-2101841.



\begin{thebibliography}{9999}

\bibitem{AK2018} O.\,Ahl\'en and   A.\,Kleinschmidt, {\it $D^6R^4$ curvature corrections, modular graph functions and Poincar\'e series},  JHEP  {\bf 18} (2018) 05, 194.

\bibitem{BKP2020} G.\,Bossard, A.\,Kleinschmidt and B.\,Pioline, {\it 1/8-BPS Couplings and Exceptional Automorphic Functions}, SciPost Physics. 2020; 8(4):054.


\bibitem{CGPWW2020} S.\,Chester, M.\,Green, S.\,Pufu, Y.\,Wang and C.\,Wen, {\it Modular Invariants in $\mathcal N=4$ Super-Yang-Mills Theory},  JHEP  {\bf 11} (2020) 016.

\bibitem{CGPWW2021} S.\,Chester, M.\,Green, S.\,Pufu, Y.\,Wang and C.\,Wen, {\it New Modular Invariants in $\mathcal N=4$ Super-Yang-Mills Theory},  JHEP  {\bf 04} (2021) 212.

\bibitem{DGH2015} E.\,D'Hoker, M.B.\,Green and P.\,Vanhove, {\it On the modular structure of the genus Type II superstring low energy expansion}, JHEP  {\bf 08} (2015) 041.


\bibitem{DK2019} D.\,Dorigoni and A.\,Kleinschmidt, {\it Modular graph functions and asymptotic expansions for Poincar\'{e} series}, Comm. in Number Theory and Physics, {\bf 13}  (2019) 3, 569-617.

\bibitem{DKS2022a} D.\,Dorigoni, A.\,Kleinschmidt, and O.\,Schlotterer, {\it Poincar\'{e} series for modular graph forms at depth two, Part I. Seeds and Laplace systems},
 JHEP  {\bf 01} (2022) 133.

\bibitem{DKS2022b} D.\,Dorigoni, A.\,Kleinschmidt, and O.\,Schlotterer, {\it Poincar\'{e} series for modular graph forms at depth two, Part II. Iterated integrals of cusp
forms},
 JHEP  {\bf 01} (2022), 134.

 \bibitem{FKLR} K.\,Fedosova, K.\,Klinger-Logan, D.\,Radchenko, {\it  Convolution identities for divisor sums and modular forms}, Proc. Natl. Acad. Sci. U.S.A. 121 (44) e2322320121 (2024).

\bibitem{GKS2020} J.\,Gerken, A.\,Kleinschmidt, and O.\,Schlotterer, {\it All-order differential equations for one-loop closed-string integrals and modular graph forms},
 JHEP  {\bf 01} (2020) 064.

\bibitem{GHV1999} M.B.\,Green, H.\,Kwon, and P.\,Vanhove, {\it Two loops in eleven dimensions}, Physical Review D 61 (2000).

\bibitem{GMRV2010} M.B.\,Green, S.D.\,Miller, J.G.\,Russo and P.\,Vanhove, {\it Eisenstein series for higher-rank groups and string theory amplitudes}, Comm. in Number Theory and Physics 4 (2010), pp.~551-596.

\bibitem{GMV2015} M.B.\,Green, S.D.\,Miller and P.\,Vanhove, {\it $SL(2,\Z)$-invariance and D-instanton contributions to the $D^6R^4$ interaction}, Comm. in Number Theory and Physics 9 (2015), pp.~307-344.

\bibitem{GRV2010} M.B.\,Green, J.G.\,Russo and P. Vanhove, {\it Automorphic properties of low energy string amplitudes in various dimensions}, Phys.Rev.D 81 (2010) 086008.

\bibitem{GV2006} M.B. Green and P. Vanhove, {\it Duality and higher derivative terms in M-theory}, JHEP
{\bf 0601} (2006) 093 [arXiv:hep-th/0510027].

\bibitem{KKL2018} K.\,Klinger-Logan, {\it Differential equations in automorphic forms}, Comm. in Number Theory and Physics, {\bf 12} (2018), no.4, 767-827.

\bibitem{Pioline} B.\,Pioline, {\it $D^6R^4$ amplitudes in various dimensions},
 JHEP  {\bf 04} (2015) 57.
\end{thebibliography}
\end{document}